\DeclareFontFamily{OT1}{pzc}{}
\DeclareFontShape{OT1}{pzc}{m}{it}{<-> s * [1.10] pzcmi7t}{}
\DeclareMathAlphabet{\mathpzc}{OT1}{pzc}{m}{it}
\theoremstyle{plain}
\newtheorem {lemma}{Lemma}[] 
\newtheorem {theorem}[lemma]{Theorem}
\newtheorem {corollary}[lemma]{Corollary}
\newtheorem {proposition}[lemma]{Proposition}
\theoremstyle{definition}
\theoremstyle{definition}
\newcommand{\Ga}{\Gamma}
\newcommand{\ga}{\gamma}
\newcommand{\Pgr}{\mathpzc{Pgr}}
\newcommand{\Prr}{\mathpzc{Pr}}
\newcommand{\gr}{\text{gr}}
\newcommand{\coker}{\text{coker}\,}
\let\nbd=\nobreakdash
\newcommand{\ie}{{\it i.e.}}
\title{On Quillen's calculation of graded $K$-theory}
\author{Roozbeh Hazrat}
\address{Roozbeh Hazrat, 
Pure Mathematics Research Centre,
School of Mathematics and Physics,
Queen's University Belfast, Belfast
  BT7~1NN, Northern Ireland, United Kingdom}
\email{r.hazrat@qub.ac.uk}
\urladdr{http://web.am.qub.ac.uk/users/r.hazrat/Home.html}
\author{Thomas H\"uttemann}
\address{Thomas H\"uttemann, Pure Mathematics Research Centre,
School of Mathematics and Physics,
Queen's University Belfast, Belfast
  BT7~1NN, Northern Ireland, United Kingdom}
\email{t.huettemann@qub.ac.uk}
\urladdr{http://huettemann.zzl.org/}
\subjclass[2000]{19D50}
\keywords{Graded ring, graded $K$-theory}
\thanks{This work was supported by the Engineering and Physical
  Sciences Research Council [grants number EP/H018743/1 and
  EP/I007784/1].}
\begin{document}

\begin{abstract}
  We adapt \textsc{Quillen}'s calculation of graded $K$\nbd-groups of
  $\mathbb Z$\nbd-graded rings with support in~$\mathbb N$ to graded
  $K$\nbd-theory, allowing gradings in a product $\mathbb Z \times G$
  with $G$ an arbitrary group. This in turn allows us to use inductions and calculate graded $K$-theory of $\mathbb Z^m$-graded rings. 
\end{abstract}

\maketitle

\section*{Introduction}

Let $G$ be a group, written additively, and let $A$ be a
$G$\nbd-graded ring. We denote the category of finitely generated
$G$\nbd-graded projective right $A$\nbd-modules by~$\Pgr^G(A)$. This
is an exact category with the usual notion of (split) short exact
sequence, and we denote the \textsc{Quillen} $K$\nbd-groups
$K_i(\Pgr^G(A))$ by~$K_i^G(A)$. The group $G$ acts on the category
$\Pgr^G(A)$ from the right via $(P,g) \mapsto P(g)$, where $P(g)_h =
P_{g+h}$. By functoriality of $K$\nbd-groups this equips $K_i^G(A)$
with the structure of a right $\mathbb Z[G]$\nbd-module.

If $A$ is strongly graded, \ie, if $1 \in A_g A_{-g}$ for all $g \in
G$, then \textsc{Dade}'s Theorem (\cite[Theorem~3.11]{grrings})
implies that the category of finitely generated projective right
$A_0$-modules, denoted $\Prr(A_0)$, is equivalent to~$\Pgr^G(A)$. This
implies that there is a natural isomorphism $K_i(A_0) \cong K_i^G(A)$.

The relation between graded $K$\nbd-groups and non-graded
$K$\nbd-groups is not always apparent. For example consider the
$\mathbb Z$\nbd-graded matrix ring $A=\mathbb{M}_5(F)(0,1,2,2,3)$, 
where $F$ is a field (see \cite[\S 9.2]{grrings} for details). Using
graded \textsc{Morita} theory one can show that
$K_0(A_0)\cong \mathbb Z \times \mathbb Z \times \mathbb Z\times
\mathbb Z$, whereas $K_0^{\mathbb Z}(A)\cong \mathbb Z[x,x^{-1}]$. Note
also that $K_0(A) \cong \mathbb{Z}$.

For a $\mathbb Z$\nbd-graded ring~$A$ with support in~$\mathbb N$ the
graded $K$\nbd-theory of~$A$ was determined by \textsc{Quillen}
\cite[Proposition, p.~107]{quillen}: The functor $P \mapsto P
\otimes_{A_0} A$ induces an isomorphism of $\mathbb
Z[x,x^{-1}]$\nbd-modules
\begin{equation}\label{qgrgr} K_i(A_0) \otimes_{\mathbb Z} \mathbb
  Z[x,x^{-1}] \cong K_i^{\mathbb Z}(A)  \ .
\end{equation}

Contrary to other fundamental theorems in the subject, such as fundamental theorem of $K$-theory (i.e., $K_i(R[x,x^{-1}])=K_i(R)\times K_{i-1}(R)$, for $R$ a regular ring), one can not use an easy induction on  ~(\ref{qgrgr}) to write a similar statement for ``multi-variables'' rings. For example, it appears that there is no obvious inductive approach to
generalise~(\ref{qgrgr}) to $\mathbb Z^m \times G$\nbd-graded
rings. However, by generalising \textsc{Quillen}'s argument to take
gradings into account on both sides of the isomorphism, such a
procedure becomes feasable. We will prove the following statement:

\begin{theorem}
  \label{thm:main}
  Let $G$ be a group, and let $A$ be a ${\mathbb Z} \times
  G$\nbd-graded ring with support in ${\mathbb N} \times G$. Then
  there is a $\mathbb Z[{\mathbb Z}\times G]$-module isomorphism
  \[K_i^{G}(A_{(0,-)}) \otimes_{\mathbb Z[G]} \mathbb Z[{\mathbb Z}\times G]
  \cong K_i^{{\mathbb Z} \times G}(A) ,\] where $A_{(0,-)}=\bigoplus_{g\in G}
  A_{(0,g)}$.
\end{theorem}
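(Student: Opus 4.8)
\medskip
\noindent\textbf{Proof strategy.}\quad The plan is to run \textsc{Quillen}'s original argument, carrying the $G$\nbd-grading along at every step. Write $B=A_{(0,-)}$; it is a $G$\nbd-graded ring, and $A$ may be regarded as an $\mathbb N$\nbd-graded ring $A=\bigoplus_{n\geq0}A_n$ with $A_n=\bigoplus_{g\in G}A_{(n,g)}$ a $G$\nbd-graded $(B,B)$\nbd-bimodule and $A_0=B$. The functor $\theta\colon\Pgr^G(B)\to\Pgr^{\mathbb Z\times G}(A)$, $P\mapsto P\otimes_B A$ with the tensor factor $P$ placed in $\mathbb Z$\nbd-degree~$0$, is exact, fully faithful, and commutes with the shift actions of~$G$, so it induces a $\mathbb Z[\mathbb Z\times G]$\nbd-module map
\[\Theta\colon K_i^G(B)\otimes_{\mathbb Z[G]}\mathbb Z[\mathbb Z\times G]\longrightarrow K_i^{\mathbb Z\times G}(A),\]
and the claim is that $\Theta$ is an isomorphism. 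For $a\in\mathbb Z$ let $\mathcal C_a\subseteq\Pgr^{\mathbb Z\times G}(A)$ be the full subcategory of modules supported in $\mathbb Z$\nbd-degrees~$\geq a$; it is closed under direct sums and summands, hence is an exact subcategory, and because $A$ has support in $\mathbb N\times G$ and the modules are finitely generated, $\Pgr^{\mathbb Z\times G}(A)=\bigcup_a\mathcal C_a$, so $K_i^{\mathbb Z\times G}(A)=\operatorname*{colim}_{a\to-\infty}K_i(\mathcal C_a)$. As shifting by $(1,0)$ gives equivalences $\mathcal C_a\simeq\mathcal C_{a-1}$, it suffices to understand $K_i(\mathcal C_0)$ and the maps induced by the inclusions $\mathcal C_0\hookrightarrow\mathcal C_{-1}$.

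The crux -- and the step I expect to be the main obstacle -- is a graded refinement of \textsc{Quillen}'s key lemma, which I would prove by a direct computation with free modules: for every $P\in\mathcal C_0$ the bottom layer $e(P):=P_{(0,-)}=\bigoplus_gP_{(0,g)}$ is a finitely generated $G$\nbd-graded projective $B$\nbd-module, and the multiplication map $\mu_P\colon\theta(e(P))=P_{(0,-)}\otimes_BA\to P$ is a split monomorphism whose cokernel $C_P$ is again finitely generated graded projective over~$A$ and lies in~$\mathcal C_1$; moreover $P\mapsto(e(P),C_P)$ is functorial. One proves this by realising $P$ as a direct summand of a finitely generated free module $F=\bigoplus_jA(-d_j,e_j)$ with all $d_j\geq0$ -- take the $d_j$ to be the $\mathbb Z$\nbd-degrees of a homogeneous generating set of~$P$ -- and computing: $F_{(0,-)}=\bigoplus_{d_j=0}B(e_j)$, the map $\mu_F$ is the inclusion of the summands with $d_j=0$, and $\coker\mu_F=\bigoplus_{d_j>0}A(-d_j,e_j)$ is projective and supported in $\mathbb Z$\nbd-degrees~$\geq1$; then restrict to the summand~$P$. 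The same computation shows that if $P$ is generated in $\mathbb Z$\nbd-degrees~$\leq w$, then $C_P$ shifted by $(1,0)$ is generated in $\mathbb Z$\nbd-degrees~$\leq w-1$.

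Given the lemma, the remainder is formal. The natural short exact sequence $0\to\theta(e(P))\xrightarrow{\mu_P}P\to C_P\to0$ -- a sequence of exact endofunctors of~$\mathcal C_0$, once $C$ is read via the inclusion $\mathcal C_1\hookrightarrow\mathcal C_0$ -- and the Additivity Theorem, together with the identities $e\circ\theta=\operatorname{id}$ and $C\circ\theta=0$ and the analogous vanishing and identity relations involving the inclusion $\mathcal C_1\hookrightarrow\mathcal C_0$ and the shift, show that $e$, $\theta$ and suitably shifted forms of $C$ and of that inclusion assemble into mutually inverse isomorphisms $K_i(\mathcal C_0^{\leq w})\cong K_i^G(B)\oplus K_i(\mathcal C_0^{\leq w-1})$, where $\mathcal C_0^{\leq w}\subseteq\mathcal C_0$ is the exact subcategory of modules generated in $\mathbb Z$\nbd-degrees~$[0,w]$ (on which, by the lemma, the cokernel construction strictly lowers~$w$). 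Since $\mathcal C_0^{\leq0}\simeq\Pgr^G(B)$ via~$\theta$ and $\mathcal C_0=\bigcup_w\mathcal C_0^{\leq w}$, induction on~$w$ and passage to the colimit give $K_i(\mathcal C_0)\cong\bigoplus_{n\geq0}K_i^G(B)$, the $n$\nbd-th summand being the image of $N\mapsto(N\otimes_BA)(-n,0)$; shifting, $K_i(\mathcal C_a)\cong\bigoplus_{n\geq a}K_i^G(B)$ compatibly with the inclusions $\mathcal C_a\hookrightarrow\mathcal C_{a-1}$, whence
\[K_i^{\mathbb Z\times G}(A)=\operatorname*{colim}_{a\to-\infty}K_i(\mathcal C_a)\cong\bigoplus_{n\in\mathbb Z}K_i^G(B).\]
Tracking how the shifts by $\mathbb Z$ and by $G$ operate throughout identifies the right-hand side with $K_i^G(B)\otimes_{\mathbb Z[G]}\mathbb Z[\mathbb Z\times G]$ and the isomorphism with~$\Theta$. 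Relative to \textsc{Quillen}'s case, the only additional labour is to keep the $G$\nbd-grading visible at each step -- in the bottom layer, the counit~$\mu$, its cokernel and the shift equivalences -- and to check compatibility with the $\mathbb Z[G]$\nbd- and $\mathbb Z[\mathbb Z\times G]$\nbd-module structures, which is routine.
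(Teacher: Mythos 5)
Your proposal is correct, and the key lemma you isolate is true and provable exactly as you sketch (the restriction from the free module $F$ to the summand $P$ works because $e$ and $\theta$ are additive and $\mu$ is natural, so $\mu_F\cong\mu_P\oplus\mu_Q$, whence $\mu_P$ is injective and $\coker\mu_P$ is a summand of $\coker\mu_F$, hence finitely generated projective and supported in $\mathbb Z$\nbd-degrees $\geq 1$). But your route is genuinely different from the paper's. The paper, following \textsc{Quillen}, first proves a graded Swan-type theorem (Proposition~\ref{swanii}): under a Nakayama-type hypothesis the functors $S=-\otimes_{A_{(0,-)}}A$ and $T=-\otimes_A A_{(0,-)}$ give a bijection on isomorphism classes, producing a non-canonical isomorphism $P\cong T(P)\otimes_{A_{(0,-)}}A$; this identifies the entire filtration $F^\lambda P$ by $\mathbb Z$\nbd-degrees and all its quotients simultaneously, a separate argument shows the quotients do not depend on the chosen section~$g$, and then a \emph{single} application of additivity for characteristic filtrations shows that the functors $\Theta_q$ and $\Psi_q$ are mutually inverse on $K$\nbd-groups; the $\mathbb Z[{\mathbb Z}\times G]$\nbd-module bookkeeping is delegated to Lemma~\ref{hggf}. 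You bypass the Swan-type theorem entirely: your bottom-layer lemma needs only the elementary free-module computation, its naturality is automatic because $\mu$ is the canonical multiplication map (so you never face the paper's issue of independence of the section~$g$), and you never invoke the hypothesis ``$MA_+=M\Rightarrow M=0$''. The price is an induction that peels off one $\mathbb Z$\nbd-degree layer at a time via the additivity theorem for the short exact sequence of exact functors $0\to\theta e\to\mathrm{id}\to C\to 0$, two passages to colimits (over generation degree and over the support bound~$a$), and hand-tracking of the $\mathbb Z[{\mathbb Z}\times G]$\nbd-structure, all of which are routine but should be written out (in particular the compatibility of the splittings $K_i(\mathcal C_0^{\leq w})\cong K_i^G(B)\oplus K_i(\mathcal C_0^{\leq w-1})$ with the inclusions used in the colimits). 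In short: both proofs rest on decomposing a graded projective by its $\mathbb Z$\nbd-degree layers plus Quillen's additivity, but yours replaces the Swan correspondence and the one-shot characteristic filtration by an elementary splitting lemma and an induction; each is a legitimate and complete strategy.
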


By a straightforward induction this now implies:

\begin{corollary}
  For a $\mathbb Z^m \times G$\nbd-graded ring~$A$ with support in
  $\mathbb N^m \times G$ there is a $\mathbb Z[x_1,\, x_1^{-1},\,
  x_2,\, x_2^{-1},\, \cdots,\, x_m,\, x_m^{-1}]$-module isomorphism
  \[K_i^G(A_{(0,-)}) \otimes_{\mathbb Z} \mathbb Z[x_1,\, x_1^{-1},\,
  x_2,\, x_2^{-1},\, \cdots,\, x_m,\, x_m^{-1}] \cong K_i^{\mathbb
    Z^m \times G}(A) \ .\]
\end{corollary}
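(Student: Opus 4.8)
The plan is to induct on $m$, using Theorem~\ref{thm:main} for both the base case and the inductive step. It is convenient to prove the (formally stronger) assertion that the displayed map is an isomorphism of $\mathbb Z[\mathbb Z^m\times G]$-modules, where the left-hand side $K_i^G(A_{(0,-)})\otimes_{\mathbb Z} R_m$, with $R_k:=\mathbb Z[x_1,x_1^{-1},\ldots,x_k,x_k^{-1}]$, is given the evident $\mathbb Z[\mathbb Z^m\times G]=\mathbb Z[G]\otimes_{\mathbb Z} R_m$-module structure ($\mathbb Z[G]$ acting on the first tensor factor and $R_m$ on the second). The Corollary as stated is then obtained by restricting scalars along $R_m=\mathbb Z[\mathbb Z^m]\hookrightarrow\mathbb Z[\mathbb Z^m\times G]$. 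For $m=1$ this strengthened statement is exactly Theorem~\ref{thm:main}: the ring isomorphism $\mathbb Z[\mathbb Z\times G]\cong\mathbb Z[G]\otimes_{\mathbb Z} R_1$ gives a natural $\mathbb Z[\mathbb Z\times G]$-linear isomorphism $M\otimes_{\mathbb Z[G]}\mathbb Z[\mathbb Z\times G]\cong M\otimes_{\mathbb Z} R_1$ for every right $\mathbb Z[G]$-module~$M$.

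For the inductive step, let $m\geq 2$, assume the strengthened statement for $m-1$ (and all groups), and let $A$ be $\mathbb Z^m\times G$-graded with support in $\mathbb N^m\times G$. Identify $\mathbb Z^m\times G=\mathbb Z\times G'$, where $G':=\mathbb Z^{m-1}\times G$ retains coordinates $1,\ldots,m-1$ and the distinguished factor $\mathbb Z$ is the $m$\nbd-th coordinate. So regarded, $A$ is $\mathbb Z\times G'$-graded with support in $\mathbb N\times G'$, and Theorem~\ref{thm:main} provides a $\mathbb Z[\mathbb Z^m\times G]$-module isomorphism
\[ K_i^{G'}(B)\otimes_{\mathbb Z[G']}\mathbb Z[\mathbb Z^m\times G]\;\cong\;K_i^{\mathbb Z^m\times G}(A), \]
where $B:=A_{(0,-)}=\bigoplus_{h\in G'}A_{(0,h)}$ (the $0$ in the $m$\nbd-th coordinate). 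Being the degree\nbd-zero part of $A$ in one coordinate, $B$ is a $\mathbb Z^{m-1}\times G$-graded ring with support in $\mathbb N^{m-1}\times G$, so the inductive hypothesis applies to it and yields a $\mathbb Z[G']$-module isomorphism
\[ K_i^G(B_{(0,-)})\otimes_{\mathbb Z} R_{m-1}\;\cong\;K_i^{G'}(B), \]
in which $B_{(0,-)}=\bigoplus_{g\in G}A_{(0,\ldots,0,g)}$ is precisely the ring $A_{(0,-)}$ of the Corollary.

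Substituting the second isomorphism into the first — legitimate because the second is $\mathbb Z[G']$-linear — and simplifying via the decompositions $\mathbb Z[G']=\mathbb Z[G]\otimes_{\mathbb Z} R_{m-1}$ and $\mathbb Z[\mathbb Z^m\times G]=\mathbb Z[G']\otimes_{\mathbb Z}\mathbb Z[x_m,x_m^{-1}]$ together with the standard identity $N\otimes_{\mathbb Z[G']}(\mathbb Z[G']\otimes_{\mathbb Z}\mathbb Z[x_m,x_m^{-1}])\cong N\otimes_{\mathbb Z}\mathbb Z[x_m,x_m^{-1}]$, one obtains a $\mathbb Z[\mathbb Z^m\times G]$-module isomorphism
\[ K_i^{\mathbb Z^m\times G}(A)\;\cong\;K_i^G(A_{(0,-)})\otimes_{\mathbb Z} R_{m-1}\otimes_{\mathbb Z}\mathbb Z[x_m,x_m^{-1}]=K_i^G(A_{(0,-)})\otimes_{\mathbb Z} R_m, \]
with the module structure on the right-hand side exactly the one described above. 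This closes the induction, and restriction of scalars to $R_m$ gives the Corollary.

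I do not anticipate any essential obstacle: all the mathematical content sits in Theorem~\ref{thm:main}, and the rest is a formal manipulation. The points that require care are the bookkeeping ones — recording which copy of $\mathbb Z$ is split off at each stage and in which order the Laurent variables are introduced; checking that passing to the degree\nbd-zero part in one coordinate commutes with the grading by the remaining coordinates, so that the inductive hypothesis genuinely applies to $B$; and verifying that all the group\nbd-ring module structures are matched by the tensor\nbd-product reassociations, in particular that the shift action $P\mapsto P(e_j)$ on $K_i^{\mathbb Z^m\times G}(A)$ goes over to multiplication by $x_j$ on the Laurent factor. Carrying the linearity over the full ring $\mathbb Z[\mathbb Z^m\times G]$ along throughout, rather than only over $R_m$, is exactly what makes the substitution step in the induction legitimate.
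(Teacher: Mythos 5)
Your proposal is correct and is essentially the paper's own argument: the paper proves the Corollary by exactly this ``straightforward induction'' on~$m$ from Theorem~\ref{thm:main}, splitting off one $\mathbb Z$ factor and absorbing the rest into the auxiliary group, and your strengthening to $\mathbb Z[\mathbb Z^m\times G]$-linearity is the right bookkeeping to make the inductive substitution legitimate.
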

For a trivial group~$G$ this is a direct generalisation of
\textsc{Quillen}'s theorem to $\mathbb Z^m$\nbd-graded rings.

\section*{Swan's Theorem}

As in \textsc{Quillen}'s calculation the proof of the Theorem is based
on a version of \textsc{Swan}'s Theorem, modified to the present
situation: it provides a correspondence between isomorphism classes of
$\mathbb Z \times G$\nbd-finitely generated graded projective
$A$\nbd-modules and of $G$\nbd-finitely generated graded projective
$A_{(0,-)}$\nbd-modules.

\begin{proposition}
  \label{swanii} Let $\Ga$ be a (possibly non-\textsc{abel}ian)
  group. Let $A$ be a $\Ga$\nbd-graded ring, $A_0$ a graded subring
  of~$A$ and $\pi \colon A\rightarrow A_0$ a graded ring homomorphism
  such that $\pi |_{A_0}=1$. (In other words, $A_0$~is a retract
  of~$A$ in the category of $\Ga$\nbd-graded rings.) We denote the
  kernel of~$\pi$ by~$A_+$.

  Suppose that for any finitely generated graded right $A$\nbd-module
  $M$ the condition $MA_+=M$ implies $M=0$. Then the natural
  functor
  \[S = - \otimes_{A_0} A \colon \Prr^{\Ga}(A_0) \rightarrow
  \Prr^{\Ga}(A)\] induces a bijective correspondence between the
  isomorphism classes of finitely generated graded projective
  $A_0$\nbd-modules and of finitely generated graded projective
  $A$\nbd-modules. An inverse of the bijection is given by the functor
  \[T = -\otimes_A A_0 \colon \Prr^{\Ga}(A) \rightarrow
  \Prr^{\Ga}(A_0) \ .\] There is a natural isomorphism $T \circ S
  \cong \mathrm{id}$, and for each $P \in \Prr^\Ga (A)$ a
  non-canonical isomorphism $S \circ T(P) \cong P$. The latter is
  given by
  \begin{equation}
    \label{eq:iso_lemma}
    T(P) \otimes_{A_0} A \rightarrow P \ , \quad x \otimes a \mapsto
    g(x) \cdot a,
  \end{equation}
  where $g$~is an $A_0$\nbd-linear section of the epimorphism $P
  \rightarrow T(P)$.
\end{proposition}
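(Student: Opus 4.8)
The plan is to follow \textsc{Quillen}'s strategy, adapted so as to keep track of the $\Ga$\nbd-grading throughout; all the ring maps, module maps and structure isomorphisms below can be chosen graded of degree~$0$, so I will not belabour this point. First I would dispose of the formal parts. Restriction and extension of scalars preserve finite generation, and an object of $\Prr^{\Ga}(A_0)$ is by definition a graded direct summand of a finite sum $\bigoplus_i A_0(\ga_i)$; since $S(\bigoplus_i A_0(\ga_i)) \cong \bigoplus_i A(\ga_i)$, the functor $S$ carries $\Prr^{\Ga}(A_0)$ into $\Prr^{\Ga}(A)$, and symmetrically, using the graded isomorphism $A(\ga_i)\otimes_A A_0 \cong A_0(\ga_i)$ induced by~$\pi$, the functor $T$ carries $\Prr^{\Ga}(A)$ into $\Prr^{\Ga}(A_0)$. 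Associativity of the tensor product together with the natural graded isomorphism $A\otimes_A A_0\cong A_0$ (again coming from~$\pi$) then gives, for $N\in\Prr^{\Ga}(A_0)$, a natural isomorphism $T(S(N))=(N\otimes_{A_0}A)\otimes_A A_0\cong N\otimes_{A_0}A_0\cong N$, so $T\circ S\cong\mathrm{id}$; in particular $S$ is injective and $T$ surjective on isomorphism classes.

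The substance of the proof is the non-canonical isomorphism $S\circ T(P)\cong P$. Fix $P\in\Prr^{\Ga}(A)$ and let $p\colon P\rightarrow P/PA_+ = T(P)$ be the canonical graded epimorphism, regarded as a morphism of graded $A_0$\nbd-modules. Since $T(P)$ is graded projective over~$A_0$, $p$ admits a graded $A_0$\nbd-linear section~$g$, and I form $\phi\colon T(P)\otimes_{A_0}A\rightarrow P$, $x\otimes a\mapsto g(x)\cdot a$; this is a morphism in $\Prr^{\Ga}(A)$, its source being finitely generated graded projective over~$A$ by the first step. The key computation is that, after applying the right exact functor $T$ and the natural identification $T(T(P)\otimes_{A_0}A)\cong T(P)$ of the first paragraph, $T(\phi)$ becomes the identity of $T(P)$ --- this is exactly where $p\circ g=\mathrm{id}$ is used. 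Hence for the finitely generated graded module $Q=\coker\phi$ right exactness of $T$ gives $T(Q)=Q/QA_+=0$, that is $QA_+=Q$, and the standing hypothesis forces $Q=0$: so $\phi$ is surjective.

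Because $P$ is graded projective, the graded epimorphism $\phi$ splits, whence its kernel $K$ is a graded direct summand of the finitely generated graded projective $A$\nbd-module $T(P)\otimes_{A_0}A$ and so itself belongs to $\Prr^{\Ga}(A)$. Applying the additive functor $T$ to the split short exact sequence $0\rightarrow K\rightarrow T(P)\otimes_{A_0}A\rightarrow P\rightarrow 0$ and using once more that $T(\phi)$ is an isomorphism, I obtain $T(K)=K/KA_+=0$, that is $KA_+=K$; the hypothesis now yields $K=0$, so $\phi$ is an isomorphism, realised by the formula~(\ref{eq:iso_lemma}). Combined with $T\circ S\cong\mathrm{id}$, this shows that $S$ and $T$ induce mutually inverse bijections on isomorphism classes, which is the assertion of the Proposition.

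I expect the only real obstacle to be the bookkeeping behind the key computation: verifying that $T(\phi)$ coincides with the identity of $T(P)$ under the canonical identification $T(T(P)\otimes_{A_0}A)\cong T(P)$. This is a short diagram chase that rests on $g$ being a section of~$p$, but both invocations of the \textsc{Nakayama}-type hypothesis --- on $Q$ and on $K$ --- depend on it, so it has to be pinned down carefully; everything else is routine homological algebra with the grading carried along passively.
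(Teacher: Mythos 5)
Your proposal is correct and follows essentially the same route as the paper: extension/restriction of scalars $S$ and $T$, the natural isomorphism $T\circ S\cong\mathrm{id}$, the section-induced map $\phi\colon T(P)\otimes_{A_0}A\to P$, and two applications of the Nakayama-type hypothesis (to $\coker\phi$ after right-exactness of $T$, and to $\ker\phi$ after splitting by projectivity of~$P$). The one step you defer --- that $T(\phi)$ is the identity of $T(P)$ under the canonical identification $T(T(P)\otimes_{A_0}A)\cong T(P)$ --- is exactly the computation the paper carries out (by composing with the isomorphism $T(f)$ and checking on primitive tensors, using $f\circ g=\mathrm{id}$), and it does hold as you assert.
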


\begin{proof}
  For any finitely generated graded projective $A_0$\nbd-module~$Q$ we
  have a natural isomorphism $TS(Q) \cong Q$ given by
  \begin{equation}
    \label{eq:nuQ}
    \nu_Q \colon TS(Q) = Q \otimes_{A_0} A \otimes_A {A_0} \rightarrow
    Q \ , \quad q \otimes a \otimes a_0 \mapsto q \pi(a)a_0 \ .
  \end{equation}
  We will show that for a graded projective $A$\nbd-module~$P$
  there is a non-canonical graded isomorphism $ST(P)\cong_{\gr}
  P$. The lemma then follows.

  Consider the natural graded $A$\nbd-module epimorphism
  \[f \colon P\rightarrow T(P)=P\otimes_{A} A_0 \ , \quad p \mapsto p
  \otimes 1 \ .\] Here $T(P)$ is considered as an $A$\nbd-module via
  the map~$\pi$. Since $T(P)$ is a graded projective $A_0$\nbd-module,
  the map $f$~has a graded $A_0$\nbd-linear section $g \colon T(P)
  \rightarrow P$. This section determines an $A$\nbd-linear graded map
  \[\psi \colon ST(P) = P \otimes_A A_0 \otimes_{A_0} A \rightarrow P
  \ , \quad p \otimes a_0 \otimes a \mapsto g(p \otimes a_0) \cdot a \
  ,\] and we will show that $\psi$~is an isomorphism. --- First note
  that the map
  \[T(f) \colon T(P) \rightarrow TT(P) \ , \quad p \otimes a_0 \mapsto
  f(p) \otimes 1 = p \otimes 1 \otimes a_0\] is an isomorphism
  (consider $T(P)$~as an $A$\nbd-module via~$\pi$ here). In fact the
  inverse is given by the isomorphism $TT(P) = P \otimes_A A_0 \otimes_A
  A_0 \rTo P \otimes_A A_0$ which maps $p \otimes a_0 \otimes b_0$ to
  $p \otimes(a_0 b_0)$.  Tracing the definitions now shows that both
  composites
  \[TST(P) \pile{\rTo^{T(\psi)} \\ \rTo_{\nu_{T(P)}}} T(P)
  \rTo^{T(f)}_\cong TT(P)\] map $p \otimes a_0 \otimes a \otimes b_0
  \in P \otimes_A A_0 \otimes_{A_0} A \otimes_A A_0 = TST(P)$ to the
  element $f \big( g(p \otimes a_0) \cdot a \big) \otimes b_0 = p
  \otimes (a_0 \pi(a) b_0) \otimes 1 \in TT(P)$. This implies that
  $T(\psi) = \nu_{T(P)}$, which is an isomorphism.

  The exact sequence
  \begin{equation}
    \label{gfds} 0 \rightarrow \ker \psi \rightarrow ST(P)
    \overset{\psi}{\rightarrow} P \rightarrow \coker \psi \rightarrow
    0
  \end{equation}
  gives rise, upon application of the right exact functor~$T$, to an
  exact sequence
  \[TST(P) \overset{T(\psi)}{\rightarrow} T(P) \rightarrow T(\coker
  \psi) \rightarrow 0 \ .\] Since $T(\psi)$ is an isomorphism we have
  $T(\coker \psi) = \coker T(\psi) = 0$. Since $\coker \psi$ is
  finitely generated by~\eqref{gfds} this implies $\coker \psi = 0$
  (note that $T(M) = MA_+$ for every finitely generated
  module~$M$). In other words, $\psi$~is surjective and
  \eqref{gfds}~becomes the short exact sequence $0 \rightarrow \ker
  \psi \rightarrow ST(P) \overset{\psi}{\rightarrow} P \rightarrow
  0$. This latter sequence splits since $P$~is projective; this
  immediately implies that $\ker \psi$ is finitely generated, and
  since $T(\psi)$ is injective we also have $T(\ker \psi) = \ker
  T(\psi) = 0$. The hypotheses guarantee $\ker \psi = 0$ now so that
  $\psi$~is injective as well as surjective, and thus is an
  isomorphism as claimed.
\end{proof}


\section*{A lemma on graded $K$-theory}

\begin{lemma}
  \label{hggf}
  Let $G$ and~$\Ga$ be groups, and let $A$ be a $G$\nbd-graded
  ring. Then, considering $A$ as a $\Ga \times G$\nbd-graded ring in a
  trivial way where necessary, the functorial assignment $(M, \gamma)
  \mapsto M(\gamma,0)$ induces a $\mathbb Z[\Ga\times G]$\nbd-module
  isomorphism
  \[K_i^{\Ga \times G}(A) \cong K_i^{G}(A) \otimes_{\mathbb Z[G]}
  \mathbb Z[\Ga \times G] \ .\]
\end{lemma}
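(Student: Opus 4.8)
The plan is to show that enlarging the $G$\nbd-grading of~$A$ to the $\Ga \times G$\nbd-grading (trivial in the $\Ga$\nbd-direction) merely splits the module category $\Pgr^{\Ga \times G}(A)$ into a $\Ga$\nbd-indexed coproduct of copies of $\Pgr^G(A)$, and then to compute the $K$\nbd-theory of such a coproduct.

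First I would record how a $\Ga \times G$\nbd-graded right $A$\nbd-module~$M$ decomposes, bearing in mind that $A_{(\ga, g)} = 0$ whenever $\ga \neq 0$. Put $M^{(\ga)} = \bigoplus_{g \in G} M_{(\ga, g)}$ for $\ga \in \Ga$. Since all homogeneous components of~$A$ live in $\Ga$\nbd-degree~$0$, each $M^{(\ga)}$ is a $\Ga \times G$\nbd-graded $A$\nbd-submodule of~$M$, and $M = \bigoplus_{\ga \in \Ga} M^{(\ga)}$ as graded $A$\nbd-modules. If $M$ is finitely generated it is generated by finitely many homogeneous elements, so $M^{(\ga)} = 0$ for all but finitely many~$\ga$ and each nonzero $M^{(\ga)}$ is finitely generated; if $M$ is in addition graded projective, then so is each direct summand $M^{(\ga)}$. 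As $M^{(\ga)}$ is concentrated in $\Ga$\nbd-degree~$\ga$, forgetting this (equivalently, applying the shift $(\ga, 0)$) turns it into an object of $\Pgr^G(A)$.

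This produces an equivalence of exact categories, both sides carrying the split exact structure,
\[ \Pgr^{\Ga \times G}(A) \;\simeq\; \bigoplus_{\ga \in \Ga} \Pgr^G(A) \ , \qquad M \longmapsto \big( M^{(\ga)} \big)_{\ga \in \Ga} \ , \]
where the right\nbd-hand side is the category of finitely supported $\Ga$\nbd-indexed families of objects of $\Pgr^G(A)$; a quasi\nbd-inverse places a family $(Q_\ga)_\ga$ into the corresponding $\Ga$\nbd-degrees and takes the direct sum. This coproduct category is the filtered colimit, over the finite subsets $T \subseteq \Ga$ ordered by inclusion and with ``extend by zero'' transition functors, of the finite product categories $\prod_{\ga \in T} \Pgr^G(A)$. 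Because Quillen $K$\nbd-theory commutes with filtered colimits of exact categories and with finite products, we obtain
\[ K_i^{\Ga \times G}(A) \;\cong\; \varinjlim_{T \subseteq \Ga \text{ finite}} \ \bigoplus_{\ga \in T} K_i^G(A) \;=\; \bigoplus_{\ga \in \Ga} K_i^G(A) \ . \]

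It remains to identify the right\nbd-hand side with $K_i^G(A) \otimes_{\mathbb Z[G]} \mathbb Z[\Ga \times G]$ as $\mathbb Z[\Ga \times G]$\nbd-modules. As abelian groups $\bigoplus_{\ga \in \Ga} K_i^G(A) = K_i^G(A) \otimes_{\mathbb Z} \mathbb Z[\Ga] = K_i^G(A) \otimes_{\mathbb Z[G]} \mathbb Z[\Ga \times G]$, using the ring isomorphism $\mathbb Z[\Ga \times G] \cong \mathbb Z[G] \otimes_{\mathbb Z} \mathbb Z[\Ga]$ over~$\mathbb Z[G]$. To see this respects the module structures I would trace the shift functors: for $P \in \Pgr^{\Ga \times G}(A)$ one has $\big( P(\ga_0, g_0) \big)^{(\ga)} = \big( P^{(\ga_0 + \ga)} \big)(g_0)$, so on $K$\nbd-groups the element $(\ga_0, g_0)$ acts by combining the given right $\mathbb Z[G]$\nbd-action on $K_i^G(A)$ with translation by~$\ga_0$ on the $\mathbb Z[\Ga]$\nbd-coordinate, which is precisely the module structure on the tensor product; and the natural map induced by the functor $(M, \ga) \mapsto M(\ga, 0)$ of the statement, namely the trivial\nbd-extension functor $\Pgr^G(A) \to \Pgr^{\Ga \times G}(A)$ followed by the shifts, is (up to the reindexing $\ga \mapsto -\ga$ of the summands) the inverse of the decomposition isomorphism. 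The only ingredient here that is not bookkeeping with the $\Ga$\nbd-grading is the behaviour of $K$\nbd-theory on the infinite coproduct of exact categories; I would obtain the identity $K_i\big(\bigoplus_{\ga \in \Ga} \Pgr^G(A)\big) = \bigoplus_{\ga \in \Ga} K_i^G(A)$ from the compatibility of $K$\nbd-theory with filtered colimits rather than prove it directly.
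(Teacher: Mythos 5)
Your proposal is correct and follows essentially the same route as the paper: decompose each $\Ga \times G$\nbd-graded module along its $\Ga$\nbd-components to get $\Pgr^{\Ga \times G}(A) \cong \bigoplus_{\ga \in \Ga}\Pgr^G(A)$, check the shifts are compatible, and use that $K$\nbd-theory turns this coproduct into $\bigoplus_{\ga}K_i^G(A) \cong K_i^G(A)\otimes_{\mathbb Z[G]}\mathbb Z[\Ga\times G]$. The only difference is that you spell out the direct-sum compatibility of $K$\nbd-theory via filtered colimits of finite products (and note the harmless $\ga \mapsto -\ga$ reindexing), which the paper simply cites as ``$K$\nbd-groups respect direct sums''.
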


\begin{proof}
  Let $P = \bigoplus_{(\gamma,g) \in \Ga \times G} P_{(\gamma,g)}$ be
  a $\Gamma\times G$\nbd-finitely generated graded projective
  $A$\nbd-module. Since the support of $A$ is $G = 1 \times G$, there
  is a unique decomposition $P=\bigoplus_{\ga \in \Ga} P_\ga$, where
  the $P_\ga = \bigoplus_{g \in G} P_{(\gamma,g)}$ are finitely
  generated $G$\nbd-graded projective  $A$\nbd-modules. This gives a
  natural isomorphism of categories
  \[\Psi \colon \Pgr^{\Gamma \times G} (A)
  \overset{\cong}{\longrightarrow} \bigoplus_{\gamma \in \Gamma}
  \Pgr^{G}(A) \ .\]
  The natural right action of $\Ga \times G$ on these categories is
  described as follows: for a given module $P \in \Pgr^{\Ga \times G}
  (A)$ as above and elements $(\gamma,g) \in \Ga \times G$ we have
  \[P(\gamma,g)_{(\delta,h)} = P_{(\gamma+\delta,g+h)} \text{ and }
  \Psi(P)_\delta = P_{\gamma + \delta}(g)\] so that $\Psi\big(
  P(\gamma,g) \big) = \Psi(P)(\gamma,g)$. Since $K$-groups respect
  direct sums we thus have a chain of $\mathbb Z[\Ga \times
  G]$\nbd-linear isomorphisms
  \begin{multline*}
    K_i^{\Ga\times G}(A) = K_i(\Pgr^{\Gamma \times G} (A) )\cong
    K_i\big(\bigoplus_{\gamma \in \Gamma} \Pgr^{G}(A)\big) \\
    = \bigoplus_{\ga \in \Ga} K_i^G(A) \cong K_i^G(A) \otimes_{\mathbb
      Z} \mathbb Z[\Ga] = K_i^G(A) \otimes_{\mathbb
      Z[G]} \mathbb Z[\Ga \times G]\ .
  \end{multline*}
\end{proof}

\section*{Proof of Theorem~\ref{thm:main}}

Let $A$ be a ${\mathbb Z} \times G$\nbd-graded ring with
support in ${\mathbb N} \times G$. That is, $A$~comes equipped with a
decomposition
\[A=\bigoplus_{\omega\in{\mathbb N}} A_{(\omega,-)} \quad \text{where}
\quad A_{(\omega,-)} = \bigoplus_{g \in G} A_{(\omega,g)}\ .\] The
ring~$A$ has a ${\mathbb Z} \times G$\nbd-graded subring~$A_{(0,-)}$ (with
trivial grading in ${\mathbb Z}$\nbd-direction). The projection map $A
\rightarrow A_{(0,-)}$ is a $\mathbb Z\times G$-graded  ring homomorphism; its
kernel is denoted~$A_+$. Explicitly, $A_+$~is the two-sided ideal
\[A_+=\bigoplus_{\omega > 0} A_{(\omega,-)} \ .\] We
identify the quotient ring~$A/A_+$ with the subring~$A_{(0,-)}$ via
the projection.

\subsection*{Functors}

If $P$ is a finitely generated graded projective $A$\nbd-module, then
$P\otimes_A A_{(0,-)}$ is a finitely generated ${\mathbb Z}\times
G$\nbd-graded projective $A_{(0,-)}$\nbd-module. Similarly, if $Q$~is a
finitely generated graded projective $A_{(0,-)}$\nbd-module then $Q
\otimes_{A_{(0,-)}} A$ is a $\mathbb Z\times G$- finitely generated graded projective
$A$\nbd-module. We can thus define functors
\begin{align*}
  T & =-\otimes_A A_{(0,-)} \colon \Pgr^{{\mathbb Z} \times G}(A)
  \longrightarrow \Pgr^{{\mathbb Z} \times G}(A_{(0,-)}) \\
  \text{and} \quad S &= -\otimes_{A_{(0,-)}}A \colon \Pgr^{{\mathbb Z} \times
    G}(A_{(0,-)}) \longrightarrow \Pgr^{{\mathbb Z} \times G}(A)\ .
\end{align*}
Since $T(P) = P/PA_+$ we see that {\it the support of~$T(P)$ is
  contained in the support of~$P$}.

Observe now that {\it if $M$ is a finitely generated ${\mathbb Z}\times
  G$-graded $A$-module and $MA_+=M$ then $M=0$}; for if $M \neq 0$
there is a minimal $\omega \in {\mathbb Z}$ such that $M_{(\omega,-)} \neq
0$, but $(MA_+)_{(\omega,-)} = 0$.  It follows from
Proposition~\ref{swanii} that for each graded finitely generated
projective $A$\nbd-module~$P$ there is a non-canonical isomorphism $P
\cong T(P) \otimes_{A_{(0,-)}} A$ as in \eqref{eq:iso_lemma} which
respects the ${\mathbb Z} \times G$\nbd-grading. Explicitly, for a given
$(\omega,g) \in {\mathbb Z} \times G$ we have an isomorphism of
\textsc{abel}ian groups
\begin{equation}
  \label{eq:noncan_iso}
  P_{(\omega,g)} \cong \bigoplus_{(\kappa,h)} T(P)_{(\kappa,h)}
  \otimes A_{(-\kappa+\omega,-h+g)} \ ;
\end{equation}
the tensor product $T(P)_{(\kappa,h)} \otimes
A_{(-\kappa+\omega,-h+g)}$ denotes, by convention, the
\textsc{abel}ian subgroup of $T(P) \otimes_{A_{(0,-)}} A$ generated by
primitive tensors of the form $x \otimes y$ with homogeneous elements
$x \in T(P)$ of degree~$(\kappa,h)$ and $y \in A$ of
degree~$(-\kappa+\omega,-h+g)$.

\subsection*{Filtration}

For a ${\mathbb Z} \times G$\nbd-graded $A$\nbd-module~$P$ write $P =
\bigoplus_{\omega \in {\mathbb Z}} P_{(\omega,-)}$, where $P_{(\omega,-)} =
\bigoplus_{g \in G} P_{(\omega,g)}$. For $\lambda \in {\mathbb Z}$ let
$F^\lambda P$ denote the $A$\nbd-submodule of~$P$ generated by the
elements of $\bigcup_{\omega \leq \lambda} P_{(\omega,-)}$; this is
${\mathbb Z} \times G$\nbd-graded again. As an explicit example, we have
\[F^\lambda A(\omega,g) = \begin{cases} A(\omega,g) & \text{if }
  \lambda \geq -\omega \ , \\ 0 & \text{else.} \end{cases}\] 

Suppose that $P$~is a finitely generated graded projective
$A$-module. Since the support of~$A$ is contained in ${\mathbb N}
\times G$ there exists $n \in {\mathbb Z}$ such that $F^{-n}P = 0$ and
$F^{n}P = P$. Write $\Pgr_n^{{\mathbb Z} \times G}(A)$ for the full
subcategory of~$\Pgr^{{\mathbb Z} \times G}(A)$ spanned by those
modules~$P$ which satisfy $F^{-n}P = 0$ and $F^{n}P = P$. Then
$\Pgr^{{\mathbb Z} \times G}(A)$ is the filtered union of the
$\Pgr_n^{{\mathbb Z} \times G}(A)$.

Let $P \in \Pgr_n^{{\mathbb Z} \times G}(A)$; we want to identify
$F^\lambda P$. By definition, the $A$\nbd-module $F^\lambda P$ is
generated by the elements of~$P_{(\omega, g)}$ for $\omega \leq
\lambda$, with $P_{(\omega,g)}$ having been identified
in~\eqref{eq:noncan_iso}. We remark that the direct summands
in~\eqref{eq:noncan_iso} indexed by $\kappa > \omega$ are trivial as
$A$~has support in ${\mathbb N} \times G$. On the other hand, for
$\omega \geq \kappa$ a given primitive tensor $x \otimes y \in
P_{(\omega,g)}$ with $x \in T(P)_{(\kappa,h)}$ and $y \in
A_{(-\kappa+\omega, -h+g)}$ can always be re-written, using the right
$A$\nbd-module structure of $T(P) \otimes_{A_{(0,-)}} A$, as
\[x \otimes y = (x \otimes 1) \cdot y \qquad \text{where } x \otimes 1
\in T(P)_{(\kappa,h)} \otimes A_{(0,0)} \subseteq P_{(\kappa, h)} \
.\] That is, the $A$\nbd-module $F^\lambda P$ is generated by those
summands of \eqref{eq:noncan_iso} with $\kappa = \omega \leq
\lambda$. We claim now that $F^\lambda P$ is isomorphic to
\begin{equation}
  \label{eq:M}
  M^{(\lambda)} = \bigoplus_{\kappa \leq \lambda} T(P)_{(\kappa,-)}
  \otimes_{A_{(0,-)}} A(-\kappa, 0) \ ,
\end{equation}
considering $T(P)_{(\kappa,-)}$ as a ${\mathbb Z} \times G$\nbd-graded
$A_{(0,-)}$\nbd-module with support in $\{0\}\times G$.  The
homogeneous components of~$M^{(\lambda)}$ are given by
\[M^{(\lambda)}_{(\omega,g)} = \bigoplus_{\kappa \leq \lambda}
\bigoplus_{h \in G} T(P)_{(\kappa,h)} \otimes
A(-\kappa,0)_{(\omega,-h+g)} \ .\] Now elements of the form
\[x \otimes 1 \in T(P)_{(\kappa,h)} \otimes
A(-\kappa,0)_{(\kappa,-h+h)} \subseteq M^{(\lambda)}_{(\kappa,h)}\]
clearly form a set of $A$\nbd-module generators for~$M^{(\lambda)}$ so
that, by the argument given above, $F^\lambda P$ and~$M^{(\lambda)}$
have the same generators in the same degrees. The claim follows. ---
The module $F^\lambda P$ is finitely generated ({\it viz.}, by those
generators of~$P$ that have ${\mathbb Z}$\nbd-degree at
most~$\lambda$). Since $T(P)$~is a finitely generated projective
$A_{(0,-)}$\nbd-module so is its summand $T(P)_{(\lambda_k,-)}$;
consequently, $P \mapsto F^kP$~is an endofunctor of~$\Pgr_n^{{\mathbb Z}
  \times G} (A)$. It is exact as can be deduced from the
(non-canonical) isomorphism in~\eqref{eq:M}, using exactness of tensor
products.

\subsection*{Filtration quotients}

From the isomorphism $F^k P \cong M^{(\lambda_k)}$, cf.~\eqref{eq:M},
we obtain an isomorphism
\begin{equation}
  \label{eq:Q}
  F^{k+1}P / F^kP \cong T(P)_{(\lambda_k,-)} \otimes_{A_{(0,-)}}
  A(-\lambda_k, 0) \ ;
\end{equation}
in particular, $F^{k+1}P/F^kP \in \Pgr_n^{{\mathbb Z} \times G} (A)$.

The isomorphism~\eqref{eq:Q} depends on the isomorphism~\eqref{eq:M},
and thus ultimately on~\eqref{eq:iso_lemma}. The latter depends on a
choice of a section~$g$ of $P \rightarrow T(P)$. Given another
section~$g_0$ the difference $g-g_0$ has image in $\ker (P \rightarrow
T(P)) = PA_+$. Since $A_+$~consists of elements of positive $\mathbb
Z$\nbd-degree only, this implies that the isomorphism $F^{k+1} P \cong
M^{(\lambda_{k+1})}$ does not depend on~$g$ up to elements in~$F^kP$;
in other words, the quotient $F^{k+1}P/F^kP$ is independent of the
choice of~$g$. Thus the isomorphism~\eqref{eq:Q} is, in fact,
a natural isomorphism of functors.

\subsection*{$K$-theory}

We are now in a position to perform the $K$\nbd-theoretical
calculations. First define the exact functor
\begin{align*} \Theta_q: \Pgr_q^{{\mathbb Z} \times G}(A_{(0,-)}) &
\longrightarrow \Pgr_q^{{\mathbb Z} \times G}(A)\\ P = \bigoplus_\omega
P_{(\omega,-)} & \mapsto \bigoplus_\omega P_{(\omega,-)}
\otimes_{A_{(0,-)}} A(-\omega,0) \ ;
\end{align*}
here $\Pgr_q^{{\mathbb Z} \times G}(A_{(0,-)})$ denotes the full
subcategory of~$\Pgr^{{\mathbb Z} \times G}(A_{(0,-)})$ spanned by
modules with support in $[-q,q] \times G$, and $P_{(\omega,-)}$~on the
right is considered as a ${\mathbb Z} \times G$\nbd-graded
$A_{(0,-)}$\nbd-module with support in~$\{0\} \times G$.

Next define the exact functor
\begin{align*} \Psi_q: \Pgr_q^{{\mathbb Z} \times G}(A) & \longrightarrow
\Pgr_q^{{\mathbb Z} \times G}(A_{(0,-)}) \\ P & \mapsto \bigoplus _{\omega
  \in {\mathbb Z}} T(P)_{(\omega,-)} \ ;
\end{align*}
here $T(P)_{(\omega,-)}$~is considered as an $A_{(0,-)}$\nbd-module
with support in~$\{\omega\} \times G$.

Now $\Psi_q \circ \Theta_q \cong \mathrm{id}$; indeed, the composition
sends the summand~$P_{(\omega,-)}$ of~$P$ to the $\kappa$\nbd-indexed
direct sum of
\[T\big(P_{(\omega,-)} \otimes_{A_{(0,-)}} A(-\omega,0)\big)_{(\kappa,-)}
\cong
\begin{cases}
  P_{(\omega,-)} & \text{if } \kappa = \omega \ , \\ 0 & \text{else.}
\end{cases}
\] In particular, $\Psi_q \circ \Theta_q$ induces the identity on
$K$\nbd-groups. --- As for the other composition, we have
\[\Theta_q \circ \Psi_q (P) = \bigoplus_{\omega} T(P)_{(\omega,-)}
\otimes_{A_{(0,-)}} A(-\omega,0) \underset{\eqref{eq:Q}}=
\bigoplus_{j=-q}^{q-1} F^{j+1}P / F^jP \ .\] Since $F^q =
\mathrm{id}$, additivity for characteristic filtrations \cite[p.~107,
Corollary~2]{quillen} implies that $\Theta_q \circ \Psi_q$ induces the
identity on $K$\nbd-groups.

For any $P \in \Pgr^{{\mathbb Z} \times G}(A_{(0,-)})$ we have
\[\big (P
\otimes_{A_{(0,-)}} A(-\omega,0)\big )(0,g)=P(0,g)\otimes _{A_{(0,-)}}
A(-\omega,0) \ ,\] by direct calculation. Hence the functor $\Theta_q$
induces a $\mathbb Z[G]$\nbd-linear isomorphism on
$K$\nbd-groups. Since $K$\nbd-groups are compatible with direct
limits, letting $q \rightarrow \infty$ yields a $\mathbb
Z[G]$\nbd-linear isomorphism $K_i^{{\mathbb Z}\times G}(A_{(0,-)}) \cong
K_i^{{\mathbb Z} \times G} (A)$
and thus, by Lemma~\ref{hggf}, a $\mathbb Z[{\mathbb Z} \times
G]$\nbd-module isomorphism
\[K_i^{G}(A_{(0,-)}) \otimes_{\mathbb Z[G]} \mathbb Z[{\mathbb Z}\times G]
\cong K_i^{{\mathbb Z} \times G}(A) \ .\]

\end{document}